\documentclass[12pt]{amsart}
\usepackage[utf8]{inputenc}
\usepackage{amssymb, amsmath}
\usepackage{fourier}
\usepackage{enumerate}

\usepackage[a4paper, left=30mm,right=30mm,top=30mm,bottom=30mm]{geometry}

\theoremstyle{plain}

\newtheorem{theorem}{Theorem}

\newtheorem{lemma}[theorem]{Lemma}

\theoremstyle{remark}

\newtheorem{remark}[theorem]{Remark}

\newcommand{\bit}{\begin{itemize}}
\newcommand{\eit}{\end{itemize}}
\newcommand{\ben}{\begin{enumerate}}
\newcommand{\een}{\end{enumerate}}
\newcommand{\be}{\begin{equation}}
\newcommand{\ee}{\end{equation}}
\newcommand{\ba}{\begin{array}}
\newcommand{\ea}{\end{array}}

\newcommand{\supp}[1]{\mathrm{supp}\left(#1\right)}

\newcommand{\abs}[1]{\left|#1\right|}
\newcommand{\norm}[1]{\left|\left|#1\right|\right|}

\newcommand{\s}{\mathrm{s}}

\newcommand\bx{\mathbf{x}}
\newcommand\bn{\mathbf{n}}

\newcommand\bnull{\mathbf{0}}

\newcommand{\ol}[1]{\overline{#1}}

\newcommand\cC{\mathcal C}

\newcommand\cH{\mathcal H}

\newcommand\cP{\mathcal P}
\newcommand\cS{\mathcal S}
\newcommand\C{\mathbb C}
\newcommand\R{\mathbb R}

\newcommand\Z{\mathbb Z}

\renewcommand\l{\lambda}
\newcommand{\dl}{\mathrm{d}\l}
\newcommand{\dmu}{\mathrm{d}\mu}

\newcommand{\ler}[1]{\left( #1 \right)}

\newcommand\T{\mathbb T}

\title{Applications of an intersection formula to dual cones}

\author{D\'aniel Virosztek}
\address{Department of Analysis, Institute of Mathematics\\
Budapest University of Technology and Economics\\
H-1521 Budapest, Hungary
and
MTA-DE ``Lend\" ulet'' Functional Analysis Research Group, Institute of Mathematics\\
         University of Debrecen\\
         H-4002 Debrecen, P.O. Box 400, Hungary}
\email{virosz@math.bme.hu}
\urladdr{http://www.math.bme.hu/\~{}virosz}

\keywords{duality, positive definite function, intersection formula}
\subjclass[2010]{Primary: 43A25. Secondary: 43A35.}

\thanks{
The author was supported by the ``Lend\" ulet'' Program (LP2012-46/2012) of the Hungarian Academy of Sciences and by the National Research, Development and Innovation Office -- NKFIH, Grant No. K104206.}

\begin{document}
\begin{abstract}
We give a succinct proof of a duality theorem obtained by R\'ev\'esz in $1991$ \cite{revesz-aus} which concerns extremal quantities related to trigonomertic polynomials.
The key tool of our new proof is an intersection formula on dual cones in real Banach spaces. We show another application of this intersection formula which is related to the integral estimates of non-negative positive definite functions.
\end{abstract}
\maketitle

\section{Introduction}

Let $X$ be a real Banach space and let $X'$ denote its topological dual space endowed with the weak-$^*$ topology. For any set $D \subseteq X,$ the \emph{dual cone} of $D$ is denoted by $D^+$ and is defined as 

$$D^+=\left\{ \varphi \in X' \middle| \varphi(x) \geq 0 \quad \forall x \in D\right\},$$
see, e.g., \cite[Section 2]{jeya-wolk}. The \emph{polar cone} (denoted by $D^-$) is defined as $D^-:=-D^+.$ Note that both $D^+$ and $D^-$ are weak-$^*$ closed convex cones in $X',$ no matter what the set $D$ is.
Moreover, by \cite[Lemma 2.1.]{jeya-wolk}, if $\cC$ and $\cP$ are convex sets in $X$ such that $0 \in \cC \cap \cP$ and $\cC \cap \mathrm{int} \, \cP\neq \emptyset$, then
\be \label{metszet}
\ler{\cC \cap \cP}^+ =\cC^+ + \cP^+.
\ee
Consequently, in this case we have $\ler{\cC \cap \cP}^- =\cC^- + \cP^-.$
\par
In this short note we show two applications of the formula \eqref{metszet} that describes the structure of the dual cone of the intersection of cones. Both applications are of a Fourier-analytic nature, hence we collect some basic facts and notation of this topic below.

\ben [(i)]
\item 
For a locally compact abelian group $G,$ the symbol $M(G)$ denotes the set of all complex-valued regular Borel measures on $G$ with finite total variation. $M(G)$ is a commutative, unital Banach algebra, where the norm is defined as $\norm{\mu}=\abs{\mu}(G),$ and the multiplication is defined by the convolution \cite[1.3.2. Corollary]{rudin}.

\item
The symbol $L^1(G)$ stands for the set of all integrable functions on $G$ (with respect to the Haar measure, which is denoted by $\lambda.$) We may consider $L^1(G)$ as a subset of $M(G)$ by the embedding
$$
\mu_{(\cdot)}: L^1(G) \rightarrow M(G), \, f \mapsto \mu_f; \, \mu_f(E)=\int_E f \dl \text{ for any Borel set } E \subseteq G.
$$
In fact $L^1(G)$ is a Banach subalgebra of $M(G)$ \cite[1.3.5. Theorem]{rudin}. Moreover, $L^1(G)$ is unital if and only if $L^1(G)=M(G)$ if and only if $G$ is discrete \cite[1.7.3. Theorem]{rudin}.

\item $L^\infty (G)$ stands for the set of all essentially bounded measurable functions on $G.$

\item 
A continuous group homomorphism from the locally compact abelian group $G$ into the multiplicative group $\mathbb{T}=\{z \in C | \abs{z}=1\}$ is called a \emph{character} of $G.$ The set of all characters of $G$ forms a group (with pointwise multiplication) which is called the \emph{dual group} of $G,$ and it is denoted by $\hat G.$
\item \label{fubini}
For any $\mu \in M(G)$ (or $f \in L^1(G)$), the symbol $\hat \mu$ (or $\hat f$) denotes the Fourier transform of $\mu$ (or $f$), that is,
$$
\hat \mu \in \C^{\hat G}; \quad \hat \mu (\gamma)= \int_G \ol \gamma \dmu \quad \ler{\gamma \in \hat G}
$$
and
$$\hat f \in \C^{\hat G}; \quad \hat f (\gamma)= \int_G f \ol \gamma \dl \quad \ler{\gamma \in \hat G}.
$$
The Fourier transform is a continuous linear transformation from $L^1(G)$ into $C_0\ler{\hat G},$ where $C_0\ler{\hat G}$ denotes the set of all functions on $\hat G$ vanishing at infinity (the topology on $\hat G$ is the weak topology induced by the set of all functions $\hat f$ obtained as Fourier-transforms of $L^1$ functions on $G$).
Moreover, it is a contraction as $\norm{\hat f}_{\infty} \leq \norm{f}_1.$ (For details, see \cite[1.2.4. Theorem]{rudin}.)
\par
The following useful formula is an easy consequence of Fubini's theorem. If $\mu \in M(G), \nu \in M\ler{\hat G}$ and $\phi(x)=\int_{\hat G} \gamma(x) \mathrm{d}\nu(\gamma) \, (x \in G),$ then
\be \label{parseval}
\int_G \overline{\phi} \mathrm{d} \mu= \int_{\hat G} \hat \mu \mathrm{d} \ol \nu. 
\ee
\een

Now we turn to the detailed descriptions of the applications of the intersection formula \eqref{metszet}. Section \ref{ujbiz} is devoted to describe the first one, and Section \ref{c-k} contains the second one.

%========================================================================================================

\section{A new proof of a duality theorem}
\label{ujbiz}

In 1991, R\'ev\'esz proved a duality theorem on certain extremal quantities related to multivariable trigonometric polynomials \cite{revesz-aus}. That theorem is general enough to cover the duality statements appearing in \cite{revesz-period}, \cite{revesz-imacs} and \cite{ruzsa}.
The setting of the theorem is as follows.
\par
Let $d$ be a positive integer. Let us use the notation $\T^d=\ler{\R/2 \pi \Z}^d$ and
$$
\Z_+^d=\left\{\bn=(n_1, \dots, n_d) \in \Z^d \middle|
\exists j \in \{1,2, \dots, d\} \text{ such that } n_k=0 \text{ for any } k < j \text{ and } n_{j}>0 \right\}.
$$
Let $M \subseteq \Z_+^d, \, L \subseteq \Z_+^d,$ and let $M^c:=\Z_+^d \setminus M$ and $ L^c:=\Z_+^d \setminus L.$
\par
Consider the real Banach space $L_{\R,\s}^1\ler{\Z^d}$ of all symmetric real-valued absolutely summable functions on $\Z^d$ with its topological dual space $L_{\R,\s}^\infty\ler{\Z^d}.$

Set 
$$
\cC:=
\left\{
f \in L_{\R,\s}^1\ler{\Z^d}
\middle|
f \text{ has finite support, }
\right.
$$
$$
\left.
\supp{f_+} \subseteq \{\bnull\} \cup M \cup -M \text{ and }
\supp{f_-} \subseteq \{\bnull\} \cup L \cup -L
\right\}.
$$
The set $\cC \subseteq  L_{\R,\s}^1\ler{\Z^d}$ is a convex set. It is easy to see that the dual cone of $\cC$ is
$$
\cC^+=\left\{t \in L_{\R,\s}^\infty\ler{\Z^d}\middle| \supp{t_+} \subseteq  L^c \cup -L^c \text{ and }
\supp{t_-} \subseteq M^c \cup -M^c
\right\}.
$$
Therefore, the polar cone of $\cC$ is
$$
\cC^-=\left\{t \in L_{\R,\s}^\infty\ler{\Z^d}\middle| 
\supp{t_+} \subseteq M^c \cup -M^c
\text{ and }
\supp{t_-} \subseteq  L^c \cup -L^c
\right\}.
$$
\par
Set
$$
\cP:=\left\{f \in L_{\R,\s}^1\ler{\Z^d}\middle|
\hat f(\bx)=\sum_{\bn \in \Z^d} f(\bn) e^{-i \bn \cdot \bx} =
f(\bnull)+2 \sum_{\bn \in \Z_+^d} f(\bn) \cos{\ler{\bn \cdot \bx}}
\geq 0 \, \forall \bx \in \T^d\right\}.
$$
Clearly, $\cP$ is a convex set. The following Lemma is devoted to describe its dual cone.

\begin{lemma} \label{elso-lem}
$$
\cP^+=\left\{h \in L_{\R, \s}^\infty\ler{\Z^d} \middle| \,h \gg 0,\text{ that is, } h \text{ is positive definite}\right\}.
$$
\end{lemma}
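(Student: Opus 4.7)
The plan is to prove the two inclusions $\supseteq$ and $\subseteq$ separately. The direction $\supseteq$ combines Bochner's theorem for $\Z^d$ with the Parseval-type identity \eqref{parseval} recalled in the introduction, while $\subseteq$ produces enough test elements of $\cP$ by an autocorrelation construction.

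For $\supseteq$, let $h \in L_{\R,\s}^\infty(\Z^d)$ be positive definite. By the Bochner--Herglotz theorem on the discrete group $\Z^d$, there is a nonnegative finite Borel measure $\nu$ on $\widehat{\Z^d} \cong \T^d$ such that
\[
h(\bn) = \int_{\T^d} e^{-i \bn \cdot \bx}\, \mathrm{d}\nu(\bx) \qquad (\bn \in \Z^d).
\]
For any $f \in \cP$, apply \eqref{parseval} with $G = \Z^d$, $\mu = \mu_f$, and this $\nu$. Since $\nu$ is positive (hence real), the function $\phi$ appearing in \eqref{parseval} satisfies $\overline{\phi} = h$, and the identity yields
\[
\sum_{\bn \in \Z^d} h(\bn)\, f(\bn) = \int_{\T^d} \hat f(\bx)\, \mathrm{d}\nu(\bx) \geq 0,
\]
because $\hat f \geq 0$ by the definition of $\cP$ and $\nu \geq 0$. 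Hence $h \in \cP^+$.

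For $\subseteq$, let $h \in \cP^+$. To verify positive definiteness of $h$, let $g \colon \Z^d \to \R$ be any finitely supported real function and define its autocorrelation
\[
f(\mathbf{k}) := \sum_{\bn \in \Z^d} g(\bn + \mathbf{k})\, g(\bn) \qquad (\mathbf{k} \in \Z^d).
\]
A short direct check confirms that $f$ has finite support, is real-valued and symmetric, and $\hat f(\bx) = \abs{\hat g(\bx)}^2 \geq 0$; hence $f \in \cP$. Pairing against $h$ and relabeling indices yields
\[
\sum_{\mathbf{k},\, \bn \in \Z^d} h(\mathbf{k} - \bn)\, g(\mathbf{k})\, g(\bn) \;=\; \sum_{\mathbf{k} \in \Z^d} h(\mathbf{k})\, f(\mathbf{k}) \;\geq\; 0,
\]
which is the positive-definiteness inequality for $h$ against real coefficients. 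For arbitrary complex coefficients $c = a + ib$, splitting into real and imaginary parts and using the symmetry $h(-\bn) = h(\bn)$ shows the imaginary cross term cancels, so the full complex positive-definiteness condition follows.

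The routine fraction of the argument dominates, and the only non-elementary input is the Bochner--Herglotz theorem on $\Z^d$. The one genuinely delicate point is bookkeeping the Fourier-analytic conventions, in particular the sign in the exponent of $\hat{\,\cdot\,}$ and the compatibility between symmetry of $h$ and the Hermitian form defining positive definiteness, both of which must be tracked to ensure that $\overline{\phi} = h$ in \eqref{parseval} and that $\hat f = \abs{\hat g}^2$ in the autocorrelation construction.
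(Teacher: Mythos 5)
Your proposal is correct and follows essentially the same route as the paper: the inclusion $\supseteq$ via Bochner's theorem combined with the identity \eqref{parseval}, and the inclusion $\subseteq$ via the autocorrelation $f = x * \tilde x$ of a finitely supported real function (the paper phrases this contrapositively, but the construction is identical), together with the observation that for real symmetric functions real-coefficient positive definiteness already implies the complex-coefficient condition.
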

\begin{proof}[Proof of Lemma \ref{elso-lem}]
Recall that a function $h \in L_{\R, \s}^\infty\ler{\Z^d}$ is said to be positive definite if $\sum_{i,j=1}^m z_i \overline{z_j} g\ler{\mathbf{n_i}-\mathbf{n_j}}\geq 0$ holds for any $\mathbf{n_1}, \dots,\mathbf{n_m} \in \Z^d$ and $z_1, \dots,z_m \in \C.$ However, for symmetric real functions, positive definiteness is equivalent to the a priori weaker condition $\sum_{i,j=1}^m c_i c_j g\ler{\mathbf{n_i}-\mathbf{n_j}}\geq 0 \, \ler{\mathbf{n_1}, \dots,\mathbf{n_m} \in \Z^d, \, c_1, \dots, c_m \in \R}.$
\par
Let us recall Bochner's theorem \cite[1.4.3. Theorem]{rudin} which says that a function $h \in L_{\R, \s}^\infty\ler{\Z^d}$ is positive definite if and only if there is a non-negative symmetric measure $ \nu \in M_{\R, \s}\ler{\T^d}$ such that
$$
h(\bn)=\int_{\T^d} e^{i \bn \cdot \bx} \mathrm{d}\nu(\bx) \qquad \ler{ \bn \in \Z^d}.
$$
Therefore, the positive definite functions are in $\cP^+$ as any positive definite $h \in L_{\R,\s}^\infty\ler{\Z^d}$ can be written in the form $h(\bn)=\int_{\T^d} e^{i \bn \cdot \bx} \mathrm{d}\nu(\bx) \, \, \ler{ \bn \in \Z^d}$ for some $0 \leq \nu \in M_{\R, \s}\ler{\T^d},$ and hence, by equation \eqref{parseval}, the inequality
$$
\sum_{\bn \in \Z^d} f(\bn)  h(\bn)=\int_{\T^d} \hat f(\bx) \mathrm{d} \nu(\bx) \geq 0
$$
for any $f \in \cP \subset L_{\R,\s}^1\ler{\Z^d}.$
\par
Conversely, if $g \in L_{\R,\s}^\infty\ler{\Z^d}$ and $g$ is not positive definite, that is,
$$
\sum_{i,j=1}^m c_i c_j g\ler{\mathbf{n_i}-\mathbf{n_j}}<0
$$
for some $\{\mathbf{n_1}, \dots,\mathbf{n_m}\} \subset \Z^d$ and $\{c_1, \dots,c_m\}\subset \R,$ then
$$
\sum_{\bn \in \Z^d} \ler{x * \tilde x}(\bn) g(\bn)<0,
$$
where $x=\sum_{i=1}^m c_i \chi_{\{\mathbf{n_i}\}}$ and $\tilde x$ is defined by $\tilde x (\mathbf{n})=x(-\mathbf{n})\, \ler{\mathbf{n} \in \Z^d}.$ Clearly, $x * \tilde x \in \cP \subset L_{\R,\s}^1\ler{\Z^d},$ hence this means that $g \notin \cP^+.$
\end{proof}

Now, let $r \in L_{\R,\s}^\infty \ler{\Z^d}$ with $r(\bnull)=1$ be fixed and let us define the affine subspace
$$
\cH:=\left\{f \in L_{\R,\s}^1 \ler{\Z^d} \middle| f(\bnull)=1\right\}.
$$
According to \cite[eq. (5) and eq. (12)]{revesz-aus}, let us define the extremal quantities
$$
\alpha:=\mathrm{inf} \left\{\sum_{\bn \in \Z^d} f(\bn) r(\bn) \middle| f \in \cC \cap \cP \cap \cH \right\}
$$
and
$$
\omega:=\mathrm{sup} \left\{\delta \in \R \, \middle| \exists t \in \cC^- \text{ such that } r+t-\delta \chi_{\{\bnull\}} \in \cP^+ \right\}
$$
$$
=\mathrm{sup} \left\{\delta \in \R \, \middle| \exists t \in \cC^- \text{ such that } \delta \chi_{\{\bnull\}}-r - t \in \cP^- \right\}
$$
$$
=\mathrm{sup} \left\{\delta \in \R \, \middle| \delta \chi_{\{\bnull\}}-r \in \cC^- +  \cP^- \right\}
$$
(It is clear that the definition on $\alpha$ coincides with the definition given in \cite[eq. (5)]{revesz-aus}. It is less obvious that the definition of $\omega$ is the same as the one given in \cite[eq. (12)]{revesz-aus}. However, the fact that the nonnegative symmetric measures on $\T^d$ are in one-to-one correspondence with the real positive definite functions on $\Z^d$ by the Fourier transform may convince the reader that the definition of $\omega$ is also correct.)
\par
We mentioned before that $\cC$ and $\cP$ are convex sets in the real Banach space $L_{\R,\s}^1\ler{\Z^d}$. It is clear that $0 \in \cC \cap \cP$ and $\cC \cap \mathrm{int} \, \cP\neq \emptyset,$ as $\chi_{\{\bnull\}} \in \cC \cap \mathrm{int} \, \cP.$ (The fact that $\chi_{\{\bnull\}} \in \mathrm{int} \, \cP$ can be easily seen by the following. The Fourier transform is a contraction from $L_{\R,\s}^1\ler{\Z^d}$ into $C_{\R,\s}\ler{\T^d},$ and $\widehat{\chi_{\{\bnull\}}}=1.$)
Therefore, by \cite[Lemma 2.1.]{jeya-wolk}, the intersection formula
$$
\ler{\cC \cap \cP}^+ =\cC^+ + \cP^+
$$
holds.
Consequently, we have $\ler{\cC \cap \cP}^- =\cC^- + \cP^-.$
So, by this intersection formula, $\omega$ can be rewritten as
$$
\omega=\mathrm{sup} \left\{\delta \in \R \, \middle| \delta \chi_{\{\bnull\}}-r \in \ler{\cC\cap \cP}^- \right\}.
$$

\begin{theorem}[R\'ev\'esz, \cite{revesz-aus}]
$$\alpha=\omega.$$
\end{theorem}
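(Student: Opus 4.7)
My plan is to exploit the fact (already derived in the excerpt) that the intersection formula turns $\omega$ into an optimisation against the single convex cone $\cC\cap\cP$, and then to run a short cone-duality argument. First I would verify that both $\cC$ and $\cP$ are closed under multiplication by non-negative scalars: for $\cC$ this is immediate from $(cf)_{\pm}=cf_{\pm}$ when $c\geq 0$, while for $\cP$ it follows from $\widehat{cf}=c\hat f$. Hence $\cC\cap\cP$ is a convex cone, and the condition $f(\bnull)=1$ in the definition of $\alpha$ plays the role of a mere normalisation. The version of $\omega$ written in terms of $(\cC\cap\cP)^-$ in the excerpt translates into the statement that $\delta\leq \omega$ exactly when $r-\delta\chi_{\{\bnull\}}\in (\cC\cap\cP)^+$, i.e., when $\sum_{\bn}f(\bn)r(\bn)\geq \delta\,f(\bnull)$ for every $f\in\cC\cap\cP$.

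The inequality $\alpha\geq \omega$ is then immediate: for any $\delta<\omega$ and any $f\in\cC\cap\cP\cap\cH$ one has $\sum_{\bn}f(\bn)r(\bn)\geq \delta\cdot f(\bnull)=\delta$, so $\alpha\geq \delta$, and letting $\delta\uparrow \omega$ yields $\alpha\geq \omega$.

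For the reverse inequality $\omega\geq \alpha$, I would fix an arbitrary $\delta<\alpha$ and prove that $r-\delta\chi_{\{\bnull\}}\in(\cC\cap\cP)^+$, i.e., $\sum_{\bn}f(\bn)r(\bn)\geq \delta\,f(\bnull)$ for every $f\in\cC\cap\cP$. When $f(\bnull)>0$, the cone property lets me rescale to $f/f(\bnull)\in\cC\cap\cP\cap\cH$; the definition of $\alpha$ then gives $\sum_{\bn}f(\bn)r(\bn)/f(\bnull)\geq \alpha>\delta$, which rearranges to the desired inequality. I expect the only genuine obstacle to be the boundary case $f(\bnull)=0$. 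Here I would argue that every $f\in\cC$ has finite support, so $\hat f$ is a \emph{continuous} function on $\T^d$; the hypothesis $f\in\cP$ makes it non-negative, and its integral equals $(2\pi)^d f(\bnull)=0$, which forces $\hat f\equiv 0$ and hence $f=0$, a case in which the target inequality is trivial. Combining the two inequalities gives $\alpha=\omega$.
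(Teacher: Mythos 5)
Your proposal is correct and follows essentially the same route as the paper: both directions reduce to the identity $\ler{\cC\cap\cP}^-=\cC^-+\cP^-$ already established before the theorem, with the normalisation $f(\bnull)=1$ recovered by rescaling. The only cosmetic difference is that you argue directly and spell out why $f\in\cC\cap\cP$ with $f(\bnull)=0$ forces $f=0$, whereas the paper runs the second inequality by contraposition and asserts in one line that a non-zero $f\in\cP$ has $f(\bnull)>0$ --- the same underlying fact.
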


\begin{proof}[A short proof]
If $\delta \chi_{\{\bnull\}}-r \in \ler{\cP\cap \cC}^-,$ then
$$
\sum_{\bn \in \Z^d} f(\bn) r(\bn) \geq \delta \text{ for any } f \in  \cC \cap \cP \cap \cH,
$$
as in this case
$$
0 \geq \sum_{\bn \in \Z^d} f(\bn) \ler{\delta \chi_{\{\bnull\}}(\bn)-r(\bn)}
=
\delta f(\bnull)-\sum_{\bn \in \Z^d} f(\bn)r(\bn)
=
\delta-\sum_{\bn \in \Z^d} f(\bn)r(\bn).
$$
Therefore, $\omega \leq \alpha.$
\par
On the contrary, if $\beta> \omega,$ then $ \beta \chi_{\{\bnull\}}-r \notin \ler{\cC\cap \cP}^-,$ that is, there exists some $f \in \cC\cap \cP$ such that

$$
\sum_{\bn \in \Z^d} f(\bn) \ler{\beta \chi_{\{\bnull\}}(\bn)-r(\bn)}> 0.
$$

This $f$ is necessarily a non-zero element of $\cP,$ hence $f(\bnull)>0$. Therefore, without loss of generality, we can assume that $f(\bnull)=1,$ so there exists some $f \in \cC \cap \cP \cap \cH$ such that
$$
\sum_{\bn \in \Z^d} f(\bn) \ler{\beta \chi_{\{\bnull\}}(\bn)-r(\bn)} > 0.
$$
That is,
$$
\beta > \sum_{\bn \in \Z^d} f(\bn) r(\bn)
$$
for some $f \in \cC \cap \cP \cap \cH,$ which means that $\beta >\alpha.$
So, we deduced that $\beta>\omega$ implies $\beta>\alpha,$ therefore, $\alpha \leq \omega.$ The proof is done. 
\end{proof}

%========================================================================================================
\section{Another application of the intersection formula}
\label{c-k}

The second application concerns integral estimates of non-negative positive definite functions. This problem is related to Wiener's problem \cite{shapiro,wiener} and to the recent works \cite{gorbacsev,shteinikov}.
The arguments in this section are partially parallel to the arguments presented in the previous section.

Let $L_{\R, \s}^1(\Z)$ denote the real Banach space of all real-valued, symmetric, summable functions on $\Z$ and let us consider its topological dual space $L_{\R, \s}^{\infty}\ler{\Z}$ endowed with the weak-$^*$ topology. 
\par
Let us define $\cC=\left\{ f \in L_{\R,\s}^1(\Z) \middle| f \geq 0\right\}$ and $\cP=\left\{ f \in L_{\R,\s}^1(\Z) \middle| \hat f \geq 0\right\}.$ Clearly, $\cC$ and $\cP$ are convex cones in $L_{\R,s}^1(\Z).$ The closedness of $\cC$ is obvious, and $\cP$ is also closed as the Fourier transform is a continuous (moreover, norm-non-increasing) linear transformation from $L_{\R,\s}^1(\Z)$ into $C_{\R,\s}(\T),$ and the nonnegative functions form a closed set of $C_{\R,\s}(\T)$ with respect to the maximum norm topology. (The symbol $\T$ denotes the additive group of real numbers modulo $2 \pi$ and $C_{\R,\s}(\T)$ stands for the Banach space of all continuous, symmetric real functions on $\T.$) 
\begin{lemma} \label{masodik-lem}
$$\cC^+=\{g \in L_{\R, \s}^\infty(\Z) | \, g \geq 0\},$$
and  
$$
\cP^+=\{h \in L_{\R, \s}^\infty(\Z) | \,h \gg 0,\text{ that is, } h \text{ is positive definite}\}.
$$
\end{lemma}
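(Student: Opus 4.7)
The plan is to prove the two identities separately. The argument for $\cC^+$ is elementary, while the argument for $\cP^+$ essentially duplicates the proof of Lemma \ref{elso-lem} specialized to $d=1$.

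For the first identity, the inclusion $\{g \in L_{\R,\s}^\infty(\Z) \mid g \geq 0\} \subseteq \cC^+$ is immediate since nonnegative functions pair nonnegatively. For the reverse inclusion, given $g \in \cC^+$, I would test $g$ against the symmetric atoms $\chi_{\{\bnull\}}$ and $\chi_{\{n\}}+\chi_{\{-n\}}$ for each $n \in \Z \setminus \{0\}$. These lie in $\cC$, and evaluating the duality pairing combined with the symmetry $g(n) = g(-n)$ forces $g(n) \geq 0$ for every $n$.

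For the second identity, I would follow the two-step argument of Lemma \ref{elso-lem} verbatim. First, assume $h \in L_{\R,\s}^\infty(\Z)$ is positive definite. Bochner's theorem yields a nonnegative symmetric measure $\nu \in M_{\R,\s}(\T)$ with $h(n) = \int_\T e^{inx} \mathrm{d}\nu(x)$, and the Parseval-type identity \eqref{parseval} converts the pairing against any $f \in \cP$ into
$$
\sum_{n \in \Z} f(n)\, h(n) = \int_\T \hat f(x) \, \mathrm{d}\nu(x) \geq 0,
$$
since $\hat f \geq 0$ by definition of $\cP$. Conversely, if $g \in L_{\R,\s}^\infty(\Z)$ fails to be positive definite, I would pick real scalars $c_1, \dots, c_m$ and integers $n_1, \dots, n_m$ with $\sum_{i,j} c_i c_j g(n_i - n_j) < 0$, then set $x = \sum_i c_i \chi_{\{n_i\}}$ and $\tilde x(n) = x(-n)$. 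The convolution $x * \tilde x$ lies in $\cP$ because its Fourier transform is $|\hat x|^2 \geq 0$, and a direct rearrangement shows $\sum_n (x * \tilde x)(n)\, g(n) = \sum_{i,j} c_i c_j g(n_i - n_j) < 0$, hence $g \notin \cP^+$.

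I do not anticipate a genuine obstacle: the argument is essentially the same as in Lemma \ref{elso-lem}, with the only item requiring care being that the test functions used (the symmetric atoms $\chi_{\{n\}}+\chi_{\{-n\}}$ and the convolution $x * \tilde x$) genuinely belong to $L_{\R,\s}^1(\Z)$, which they do by construction. The equivalence between complex and real positive definiteness for symmetric real functions, already noted in the proof of Lemma \ref{elso-lem}, is what permits using real coefficients $c_i$ in the witness of non-positive-definiteness.
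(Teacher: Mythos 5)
Your proposal is correct and follows exactly the route the paper intends: the published proof simply declares the first identity obvious and the second ``very similar to the proof of Lemma \ref{elso-lem}'', and your write-up is precisely that omitted detail (atoms $\chi_{\{0\}}$, $\chi_{\{n\}}+\chi_{\{-n\}}$ for $\cC^+$; Bochner plus \eqref{parseval} and the $x*\tilde x$ witness for $\cP^+$). No discrepancy to report.
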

\begin{proof}[Proof of Lemma \ref{masodik-lem}]
The first statement of Lemma \ref{masodik-lem} is obvious. The proof of the second statement is very similar to the proof of Lemma \ref{elso-lem}.
\end{proof}

Let $L$ and $N$ be positive integers. Let us define the extremal quantities
$$
C(L,N):=\mathrm{inf}\left\{C \in \R \, \middle| \, \sum_{k=-L N}^{L N} f(k) \leq (C+1) \sum_{k=-N}^{N} f(k) \text{ for any } f \in \cC \cap \cP\right\}
$$
and
$$
K(L,N):=\mathrm{inf}\left\{h(0) \, \middle| \, h \in L_{\R,\s}^\infty(\Z), \, h \gg 0 \text{ and } h(k)\leq -\chi_{\{-L N, \dots,L N\}}(k) \text{, if } \abs{k}>N\right\}.
$$
Let us introduce
$$
\cS_{L,N}:=\left\{h \in L_{\R,\s}^\infty(\Z) \, \middle| \, h(k)\leq -\chi_{\{-L N, \dots,L N\}}(k) \text{, if } \abs{k}>N\right\},
$$
and observe that $\cS_{L,N}$ is closed in the weak-$^*$ topology as it is the intersection of weak-$^*$ closed sets.
\par
Note that
$$
C(L,N)=\mathrm{inf}\left\{C \in \R \, \middle| \, (C+1) \chi_{\{-N, \dots, N\}}-\chi_{\{-L N, \dots, L N\}} \in \ler{\cC \cap \cP}^+ \right\},
$$
and by the result of Lemma \ref{masodik-lem},
$$
K(L,N)=\mathrm{inf}\left\{h(0) \, \middle| \, h \in \cP^+ \cap \cS_{L,N}  \right\}.
$$
\begin{remark} \label{finite}
Let us note that $K(L,N)$ is finite as the set $\cP^+ \cap \cS_{L,N}$ is not empty. Indeed, one can easily check that the function
$$
w_{L,N}(k):= \begin{cases}
2(L-1)N & \text{ if } k=0\\
-1 & \text{ if } N<\abs{k}\leq L N \\
0 & \text{ otherwise}
\end{cases}
$$
is positive definite, and therefore, it is an element of $\cP^+ \cap \cS_{L,N}.$
\end{remark}

\begin{theorem} \label{dual2}
$$
C(L,N)=K(L,N).
$$
\end{theorem}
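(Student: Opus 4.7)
The plan is to apply the intersection formula $(\cC \cap \cP)^+ = \cC^+ + \cP^+$ in order to rewrite the condition defining $C(L,N)$ as a condition on positive definite functions, and then exploit the elementary bound $h(k) \leq h(0)$ (valid for any symmetric real positive definite function $h$) to recognize the resulting expression as exactly $K(L,N)$.

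First I would check the hypotheses needed to invoke \eqref{metszet}. The cones $\cC$ and $\cP$ are both convex and both contain $0$. Moreover $\chi_{\{0\}} \in \cC$, and because the Fourier transform is a contraction from $L_{\R,\s}^1(\Z)$ into $C_{\R,\s}(\T)$ with $\widehat{\chi_{\{0\}}} \equiv 1$, a small $L^1$-perturbation of $\chi_{\{0\}}$ still has strictly positive Fourier transform; hence $\chi_{\{0\}} \in \cC \cap \mathrm{int}\,\cP$. So $(\cC \cap \cP)^+ = \cC^+ + \cP^+$, and therefore
$$
C(L,N) = \inf\left\{C \in \R \,\middle|\, (C+1)\chi_{\{-N,\dots,N\}} - \chi_{\{-LN,\dots,LN\}} \in \cC^+ + \cP^+\right\}.
$$

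Next I would unpack the decomposition $(C+1)\chi_{\{-N,\dots,N\}} - \chi_{\{-LN,\dots,LN\}} = g + h$ with $g \in \cC^+$ and $h \in \cP^+$. Writing $g = (C+1)\chi_{\{-N,\dots,N\}} - \chi_{\{-LN,\dots,LN\}} - h$ and imposing $g \geq 0$ pointwise gives three conditions: $h(k) \leq C$ for $\abs{k}\leq N$, $h(k) \leq -1$ for $N < \abs{k} \leq LN$, and $h(k) \leq 0$ for $\abs{k} > LN$. The last two conditions together are precisely $h \in \cS_{L,N}$.

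Finally, using that every $h \in \cP^+$ satisfies $h(k) \leq h(0)$ for all $k \in \Z$, the first condition $h(k) \leq C$ on $\abs{k}\leq N$ collapses to the single requirement $h(0) \leq C$. Hence
$$
(C+1)\chi_{\{-N,\dots,N\}} - \chi_{\{-LN,\dots,LN\}} \in \cC^+ + \cP^+ \quad \Longleftrightarrow \quad \exists h \in \cP^+ \cap \cS_{L,N} \text{ with } h(0) \leq C,
$$
and taking the infimum over admissible $C$ yields $C(L,N) = \inf\{h(0) : h \in \cP^+ \cap \cS_{L,N}\} = K(L,N)$. The argument is essentially mechanical once the intersection formula is in place; the only point that requires any care is the verification that $\chi_{\{0\}}$ lies in the $L^1$-interior of $\cP$, and the trivial but crucial observation that positive definiteness replaces infinitely many pointwise constraints on $\{-N,\dots,N\}$ by the single constraint at $0$.
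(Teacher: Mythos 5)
Your proposal is correct and follows essentially the same route as the paper: both invoke the intersection formula via $\chi_{\{0\}} \in \cC \cap \mathrm{int}\,\cP$, both analyze the decomposition $(C+1)\chi_{\{-N,\dots,N\}} - \chi_{\{-LN,\dots,LN\}} = g+h$ with $g \in \cC^+$, $h \in \cP^+$, and both use the bound $h(k) \leq h(0)$ for positive definite $h$ to reduce the constraints on $\{-N,\dots,N\}$ to the single one at $0$. The only cosmetic difference is that you package the two inequalities $C(L,N) \leq K(L,N)$ and $K(L,N) \leq C(L,N)$ into a single equivalence before taking infima, which is a valid reorganization of the same argument.
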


\begin{proof}
The key idea is the observation that $\cC$ and $\cP$ are convex sets in $L_{\R,\s}^1(\Z)$ such that $0 \in \cC \cap \cP$ and $\cC \cap \mathrm{int} \, \cP\neq \emptyset$ as $\chi_{\{0\}} \in \cC \cap \mathrm{int} \, \cP.$ Therefore, the intersection formula
$$\ler{\cC \cap \cP}^+ =\cC^+ + \cP^+$$
holds.
\par
On the one hand, if $h \in \cP^+ \cap \cS_{L,N}$ then
$$
h \leq \ler{h(0)+1} \chi_{\{-N, \dots, N\}}-\chi_{\{-L N, \dots, L N\}}
$$
as $h(0)\geq h(n) \,\ler{n \in \Z}$ holds for any positive definite function $h \in L_{\R,\s}^\infty(\Z).$ Therefore, in this case
$$
\ler{h(0)+1} \chi_{\{-N, \dots, N\}}-\chi_{\{-L N, \dots, L N\}} \in \cC^+ +\cP^+ =\ler{\cC \cap \cP}^+,
$$
hence $C(L,N) \leq K(L,N).$
\par
On the other hand, by the intersection formula,
$$
C(L,N)=\mathrm{inf}\left\{C \in \R \, \middle| \, (C+1) \chi_{\{-N, \dots, N\}}-\chi_{\{-L N, \dots, L N\}} \in \cC^+ +\cP^+\right\}
$$
holds, hence the following argument shows the opposite inequality. If
$$ (C+1) \chi_{\{-N, \dots, N\}}-\chi_{\{-L N, \dots, L N\}} \in \cC^+ +\cP^+$$
then $ (C+1) \chi_{\{-N, \dots, N\}}-\chi_{\{-L N, \dots, L N\}}=g+h$
for some $g \in \cC^+$ and $h \in \cP^+.$ Clearly, this $h$ is an element of $\cP^+ \cap \cS_{L,N},$ and $h(0)\leq C,$ hence $K(L,N) \leq C(L,N).$
\end{proof}

\begin{remark} \label{finite2}
We have noted (see Remark \ref{finite}) that $K(L,N)$ is finite. Therefore, the result of Theorem \ref{dual2} directly implies the finiteness of $C(L,N).$
\end{remark}

\paragraph*{{\bf Acknowledgement}}
The author is grateful to Szil\'ard R\'ev\'esz for proposing the problems discussed in this paper and for useful communication. The author is also grateful to the anonymous referee for his/her comments which helped to improve the presentation of the paper.

\end{document}